\providecommand{\U}[1]{\protect\rule{.1in}{.1in}}
\newtheorem{theorem}{Theorem}[section]
\theoremstyle{plain}
\newtheorem{lemma}{Lemma}[section]
\newtheorem{proposition}{Proposition}[section]
\newtheorem{remark}{Remark}[section]
\numberwithin{equation}{section}
\newcommand{\rr}{\mathbb{R}}
\newcommand{\dx}{\mathrm{dx}}
\def\e{\epsilon}
\def\p{\partial}
\def\oo{\infty}
\def\R{\mathbb{R}}
\begin{document}
\title{Short proofs of refined sharp Caffarelli-Kohn-Nirenberg inequalities}
\author{Cristian Cazacu}
\address{Cristian Cazacu: $^1$Faculty of Mathematics and Computer Science \\
University of Bucha-rest\\
010014 Bucharest, Romania\\
\&
$^2$Gheorghe Mihoc-Caius Iacob Institute of Mathematical\\
Statistics and Applied Mathematics of the Romanian Academy\\
050711 Bucharest, Romania\\
\&
$^3$Member in the research grant no. PN-III-P1-1.1-TE-2019-0456\\ University Politehnica of Bucharest\\ 
060042, Bucharest, Romania
}
\email{cristian.cazacu@fmi.unibuc.ro}
\author{Joshua Flynn}
\address{Joshua Flynn: Department of Mathematics\\
University of Connecticut\\
Storrs, CT 06269, USA}
\email{joshua.flynn@uconn.edu}
\author{Nguyen Lam}
\address{Nguyen Lam: School of Science \& Environment\\
Grenfell Campus, Memorial University of Newfoundland\\
Corner Brook, NL A2H5G4, Canada }
\email{nlam@grenfell.mun.ca}
\thanks{C.C. was partially supported by CNCS-UEFISCDI Romania, Grant No. PN-III-P1-1.1-TE-2019-0456. J.F. was partially supported by a Simons Collaboration grant from the Simons Foundation. N.L. was partially supported by NSERC Discovery Grant}
\date{\today}

\begin{abstract}

This note relies mainly on a refined version of the main results of the paper \cite{CC} by F. Catrina and D. Costa. We provide very short and self-contained proofs. Our results are sharp and minimizers are obtained in suitable functional spaces.  As main tools we use the so-called \textit{expand of
squares} method to establish sharp weighted $L^{2}$-Caffarelli-Kohn-Nirenberg (CKN) inequalities and density arguments.  

\end{abstract}
\subjclass[2010]{81S07, 26D10, 46E35, 26D15}
\keywords{Caffarelli-Kohn-Nirenberg inequalities, sharp constants, extremizers, weighted Sobolev spaces}
\maketitle

\section{Introduction}

 Our paper is mainly motivated by the sharp $L^2$-Caffarelli-Kohn-Nirenberg (CKN) inequalities of the form 
 \begin{equation}\label{CKN}
 \int_{\rr^N} \frac{|u|^2}{|x|^{2a}} \dx \int_{\rr^N} \frac{|\nabla u|^2}{|x|^{2b}}\dx \geq C^2(N, a, b)\left(\int_{\rr^N} \frac{|u|^2}{|x|^{a+b+1}}\dx\right)^2, \quad u\in C_0^\infty(\rr^N\setminus\{0\}),
  \end{equation}
  which were established by Catrina and Wang in \cite{CW} for $a=b+1$, and Catrina and Costa in \cite[Th. 1]{CC} for general $a,  b\in \rr$. The most general family of CKN inequalities were first introduced and studied in \cite{CKN, CKN2}  as a necessity when studying well-posedness and regularity of solutions to certain Navier-Stokes equations. 
  In these pioneering works the authors did not address the sharp constant $C(N,a,b)$ (i.e., the largest constant $C(N,a,b)$ such that \eqref{CKN} is true).
  It was later when various contributors studied the problem of determining sharp constants and existence/non-existence of extremizers for general CKN inequalities with specific parameter values; see for instance \cite{CC}, \cite{CW}, \cite{Co}, \cite{DLL}, \cite{LL}.
   
  In the present paper, we refine the sharp CKN inequalities given by \eqref{CKN} established by replacing the gradient term with a term in terms of the radial derivative operator. 
  Namely, we prove the sharp refined CKN inequalities
  \begin{equation}\label{CKN_refined}
    \int_{\rr^N} \frac{|u|^2}{|x|^{2a}} \dx \int_{\rr^N} \frac{|x\cdot \nabla u|^2}{|x|^{2b+2}}\dx \geq \tilde{C}^2(N, a, b)\left(\int_{\rr^N} \frac{|u|^2}{|x|^{a+b+1}}\dx\right)^2, \quad u\in C_0^\infty(\rr^N\setminus\{0\}),
  \end{equation}
  where $\tilde{C}(N,a,b)$ denotes the sharp constant and $(a,b)\in\rr^{2}$ are again arbitrary parameters.
  In \cite{CC} the authors first proved \eqref{CKN} for certain parameter values and then, to achieve sharpness for the remaining parameter values, they relied on the technical tools of spherical harmonics and the Kelvin transform.
  Our proof provides an elementary way to simultaneously obtain the sharp inequality \eqref{CKN} for all parameter values $(a,b) \in\rr^{2}$.
% For the sake of clarity, throughout this work we use the same notations as the authors in \cite{CC}. 

  Now, it is easy to see that the sharp constant $C(N,a,b)$ in \eqref{CKN} may be obtained as the infimum of a certain energy functional:
  \begin{equation}\label{best_constant}
    C(N, a, b):=\inf_{u\in C_0^\infty(\rr^N\setminus\{0\})} E(u),\ \textrm{ where }\  E(u):=\frac{ \left(\int_{\rr^N} \frac{|u|^2}{|x|^{2a}} \dx\right)^{\frac 1 2} \left(\int_{\rr^N} \frac{|\nabla u|^2}{|x|^{2b}}\dx\right)^{\frac 1 2}}{\int_{\rr^N} \frac{|u|^2}{|x|^{a+b+1}}\dx}. 
  \end{equation} 
  Moreover, it turns out that the expression for the sharp constant and the functional form of the extremizers depend on where the parameters $(a,b)$ lie in the plane $\R^{2}$.
  Thus, following \cite{CC}, we define
  \begin{equation}
    \left\{\begin{array}{ll}
      \mathcal{A}_1:=\{(a, b) \ |\ b+1-a > 0, \ b\leq (N-2)/2\}, & \\ [5pt]
      \mathcal{A}_2:=\{(a, b) \ |\ b+1-a < 0, \ b\geq (N-2)/2\}, & \\ [5pt]
      \mathcal{A}:=\mathcal{A}_1\cup \mathcal{A}_2, & \\ [5pt]
      \mathcal{B}_1:=\{(a, b) \ |\ b+1-a < 0, \ b\leq (N-2)/2\}, & \\ [5pt]
      \mathcal{B}_2:=\{(a, b) \ |\ b+1-a > 0, \ b\geq (N-2)/2\}, & \\ [5pt]
      \mathcal{B}:=\mathcal{B}_1\cup \mathcal{B}_2 & \\ [5pt]
    \end{array}\right.
    .
  \end{equation}
  We also note that, along with $\mathcal{C} =\left\{ (a,b) \ | \ a=b+1 \right\}$, the sets $\mathcal{A}$, $\mathcal{B}$ and $\mathcal{C}$ partition the parameter plane $\R^{2}$, and that $\mathcal{C}$ is a sort of interface between $\mathcal{A}$ and $\mathcal{B}$.
  In fact, $\mathcal{C}$ is precisely where \eqref{CKN} fails to have extremizers.

  \medskip

  The main result in \cite{CC} is the following theorem (statement taken from the original paper \textit{m\'{o}t-\'{a}-m\'{o}t}):

  \begin{theorem}[Theorem 1, \cite{CC}]\label{ThCC}
    According to the location of the points $(a, b)$ in the plane, we have: 
    \begin{enumerate}[(a)]
      \item In the region $\mathcal{A}$, the best constant is $C(N, a, b)=\frac{|N-(a+b+1)|}{2}$ and it is achieved by the functions $u(x)=D \exp(\frac{t|x|^{b+1-a}}{b+1-a})$, with $t<0$ in $\mathcal{A}_1$ and $t>0$ in $\mathcal{A}_2$, and $D$ a nonzero constant.
      \item In the region $\mathcal{B}$, the best constant is $C(N, a, b)=\frac{|N-(3b-a+3)|}{2}$ and it is achieved by the functions $u(x)=D |x|^{2(b+1)-N}\exp(\frac{t|x|^{b+1-a}}{b+1-a})$, with $t>0$ in $\mathcal{B}_1$ and $t<0$ in $\mathcal{B}_2$.
      \item In addition, the only values of the parameters where the best constant is not achieved are those on the line $a=b+1$ where the best constant is $C(N, b+1, b)=\frac{|N-2(b+1)|}{2}$. 
    \end{enumerate}	
  \end{theorem}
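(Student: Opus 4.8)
The plan is to deduce Theorem~\ref{ThCC} from the refined inequality \eqref{CKN_refined}, which I will prove with sharp constant $\tilde C(N,a,b)$ equal to the value claimed for $C(N,a,b)$. The passage between the two inequalities is immediate: since $|x\cdot\nabla u|^{2}\le |x|^{2}|\nabla u|^{2}$ pointwise, \eqref{CKN_refined} forces \eqref{CKN} with $C(N,a,b)\ge\tilde C(N,a,b)$; conversely the extremizers I will produce for \eqref{CKN_refined} are radial, so on them $|x\cdot\nabla u|=|x|\,|\nabla u|$ and they are extremizers of \eqref{CKN} with the same value, giving $C(N,a,b)=\tilde C(N,a,b)$. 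Thus everything reduces to establishing \eqref{CKN_refined} sharply.

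For $(a,b)\in\mathcal A$ I will use the \emph{expand of squares} idea applied to the radial derivative $x\cdot\nabla u$. First, integration by parts (the boundary terms vanishing for $u\in C_0^\infty(\rr^N\setminus\{0\})$) gives
\[
\int_{\rr^N}\frac{u\,(x\cdot\nabla u)}{|x|^{a+b+1}}\,\dx=\frac12\int_{\rr^N}\frac{x\cdot\nabla(u^{2})}{|x|^{a+b+1}}\,\dx=-\frac{N-(a+b+1)}{2}\int_{\rr^N}\frac{u^{2}}{|x|^{a+b+1}}\,\dx.
\]
Expanding $0\le\int_{\rr^N}\bigl|\,|x|^{-b-1}(x\cdot\nabla u)-\lambda|x|^{-a}u\,\bigr|^{2}\dx$ and inserting this identity produces a quadratic in $\lambda\in\rr$ whose nonnegativity for every $\lambda$ is precisely \eqref{CKN_refined} with $\tilde C(N,a,b)\ge\frac{|N-(a+b+1)|}{2}$. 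Equality forces the quadratic to have a double root $\lambda_{0}$ and $x\cdot\nabla u=\lambda_{0}|x|^{b+1-a}u$; solving this ODE yields $u=D\exp\!\bigl(\frac{\lambda_{0}|x|^{b+1-a}}{b+1-a}\bigr)$, and checking when the three weighted integrals converge shows this is admissible for $(a,b)\in\mathcal A$, with $\lambda_{0}<0$ on $\mathcal A_{1}$ and $\lambda_{0}>0$ on $\mathcal A_{2}$. A truncation argument (logarithmic cut-offs near $0$ on the boundary $b=(N-2)/2$) places this function in the closure of $C_0^\infty(\rr^N\setminus\{0\})$. This gives part~(a).

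Region $\mathcal B$ I will fold back onto region $\mathcal A$ by a change of weight, avoiding spherical harmonics and the Kelvin transform. With $\sigma:=2(b+1)-N$, the substitution $u=|x|^{\sigma}w$ is a bijection of $C_0^\infty(\rr^N\setminus\{0\})$; since $x\cdot\nabla u=|x|^{\sigma}(\sigma w+x\cdot\nabla w)$, expanding $|x\cdot\nabla u|^{2}$ and integrating by parts as above, the cross term exactly cancels the $\sigma^{2}w^{2}$ term, so each of the three integrals in \eqref{CKN_refined} transforms into the corresponding one for the parameters $a'=a-\sigma$, $b'=b-\sigma=N-2-b$. One then checks $b'+1-a'=b+1-a$, that the map sends $\mathcal B_{1}$ onto $\mathcal A_{2}$ and $\mathcal B_{2}$ onto $\mathcal A_{1}$, and that $N-(a'+b'+1)=-(N-(3b-a+3))$; applying part~(a) to $(a',b')$ gives part~(b), the extremizer $w=D\exp\!\bigl(\frac{t|x|^{b+1-a}}{b+1-a}\bigr)$ pulling back to $u=D|x|^{2(b+1)-N}\exp\!\bigl(\frac{t|x|^{b+1-a}}{b+1-a}\bigr)$ with the stated signs of $t$.

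On the line $\mathcal C$ ($a=b+1$) one has $a+b+1=3b-a+3=2(b+1)$, and the quadratic-form argument still yields $\tilde C(N,a,b)\ge\frac{|N-2(b+1)|}{2}$; for the matching upper bound I will truncate the degenerate ``extremizer'' $|x|^{t_{0}}$ with $t_{0}=b+1-\tfrac{N}{2}$ (the unique exponent making the three integrands coincide) over expanding annuli to produce a minimizing sequence with energy $\to\frac{|N-2(b+1)|}{2}$. Non-attainment then follows from the equality analysis: an extremizer would have to be radial with $x\cdot\nabla u=\lambda_{0}u$ of one sign, hence a pure power $D|x|^{\lambda_{0}}$, for which all three integrals diverge. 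I expect the genuinely non-routine points to be (i) discovering the change of weight $u=|x|^{2(b+1)-N}w$ that trivializes region $\mathcal B$, and (ii) the soft-analysis bookkeeping---verifying the explicit extremizers belong to the completion of $C_0^\infty(\rr^N\setminus\{0\})$ in the relevant norm, and, on $\mathcal C$, making the minimizing-sequence construction and the non-existence argument rigorous.
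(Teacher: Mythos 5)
Your proposal is correct and follows the paper's overall strategy: deduce Theorem \ref{ThCC} from the sharp radial-derivative refinement \eqref{CKN_refined} (this is exactly the paper's remark that Theorem \ref{ThCC_refined} implies Theorem \ref{ThCC}, using $|\partial_r u|\le|\nabla u|$ with equality for radial functions), prove the refinement by expanding a square built from $|x|^{-b-1}x\cdot\nabla u$ and $|x|^{-a}u$ and taking the discriminant, read off the extremizers from the equality ODE, verify membership in the completion by cutoff/mollification, and treat the line $a=b+1$ by the same minimizing sequence $|x|^{b+1-N/2}$ (truncated) together with divergence of the would-be power extremizer. The one genuine difference is region $\mathcal{B}$: you fold it onto $\mathcal{A}$ via the weight change $u=|x|^{2(b+1)-N}w$, checking that the cross term cancels and that $(a,b)\mapsto(a-\sigma,\,N-2-b)$ sends $\mathcal{B}_1,\mathcal{B}_2$ to $\mathcal{A}_2,\mathcal{A}_1$; the paper instead stays with $u$ and expands a three-term square, adding $s|x|^{-b-1}u$ and choosing $s=N-2b-2$ (see \eqref{relation_2}). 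These are algebraically the same device --- the three-term square with $s=N-2b-2$ is precisely your two-term square written for $w$ --- so both yield the same elementary, spherical-harmonics-free and Kelvin-free proof; your version has the small advantage of explaining the factor $|x|^{2(b+1)-N}$ in the $\mathcal{B}$-extremizers automatically, while the paper's version never changes unknowns and gives both lower bounds at once from a single identity. Two cosmetic points to fix when writing this up: on the line $a=b+1$ the equality candidates need not be radial (they are $\alpha(x/|x|)\,|x|^{-t}$, whose integrals still diverge, so non-attainment is unaffected), and on the boundary pieces $b=(N-2)/2$ of $\mathcal{A}_2$ and $\mathcal{B}_2$ the extremizer tends to a nonzero angular profile at infinity, so the logarithmic truncation is needed at infinity rather than near the origin; both issues are exactly what the paper's density argument (Proposition \ref{prop1}, with logarithmic cutoffs at both ends plus mollification) is designed to absorb, and that argument constitutes the bulk of the ``bookkeeping'' you defer.
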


  While the constants $C(N,a,b)$ obtained in Theorem \ref{ThCC} are sharp, the extremizers (in cases (a) and (b)) do not belong to $C_{0}^{\oo}\left( \rr^{N}\setminus\left\{ 0 \right\} \right)$ (the obtained extremizers are neither smooth enough, nor compactly supported, nor necessarily vanishing at the origin $x=0$, etc.).
  Moreover, the arguments in \cite{CC} are carried out for functions with the assumption of $C_{0}^{\oo}\left( \rr^{N}\setminus\left\{ 0 \right\} \right)$ regularity.
  Thus, in order to properly study extremal properties of \eqref{CKN}, it is necessary to introduce larger function spaces which contain $C_{0}^{\oo}\left( \rr^{N}\setminus\left\{ 0 \right\} \right)$, and so that the extremizers may be approximated by $C_{0}^{\oo}\left( \rr^{N} \setminus\left\{ 0 \right\} \right)$ functions in the appropriate norm.
  In particular, the authors in \cite{CC} introduced the energy space $H_{a, b}^1$ as the completion of $C_0^\infty(\rr^N\setminus\{0\})$ in the weighted Sobolev norm 
  \begin{equation}\label{weight_Sob}
    \|u\|_{H_{a, b}^1}:= \left(\int_{\rr^N}\left [\frac{|\nabla u|^2}{|x|^{2b}}+ \frac{|u|^2}{|x|^{2a}}\right]\dx  \right)^{1/2},
  \end{equation}   
  to serve for the study of non-trivial solutions to some classes of second order nonlinear PDE. 
  However, the space $H_{a,b}^1$ was omitted both in the statement and the original proof of Theorem \ref{ThCC}.      
  On the other hand, it is easy to see that, by density and definition of $H_{a,b}^{1}$, the inequality \eqref{CKN} holds for functions in $H_{a,b}^{1}$, and, moreover, by density and Fatou's lemma, there holds 
  $$C(N, a, b)=\inf_{u \in C_0^\infty(\rr^N\setminus\{0\}), u\neq 0} E(u)=\inf_{u \in H_{a, b}^1, u\neq 0} E(u).$$ 
  However, from our point of view, it is not clear by default that the minimizers emphasized in Theorem \ref{ThCC} belong to the energy space $H_{a, b}^1$. Of course, if $u$ was a minimizer from Theorem \ref{ThCC}, then one could easily check that the norm $\|u\|_{H_{a, b}^1}$ is finite; however, one also needs to ensure that $u$ can be approximated by $C_0^\infty\left( \rr^{N} \setminus\left\{ 0 \right\} \right)$ functions in the $H_{a, b}^1$-norm.    
  In this paper, we explicitly show that the extremizers of our refinement \eqref{CKN_refined} of \eqref{CKN} may be approximated by $C_{0}^{\oo}\left( \rr^{N} \setminus \left\{ 0 \right\} \right)$ functions with respect to the appropriate weighted Sobolev norm.
  It will be easy to see that this argument may be performed for the extremizers of \eqref{CKN} with respect to the $H_{a,b}^{1}$ norm. 
  \\ 

  Our main contributions are stated in Section \ref{sec_results} and proved in Section \ref{proofs}.  
  \subsection*{Novelty of the paper}
  To resume, the new insights of this paper are mainly based on the following aspects:
  \begin{itemize}
    \item We improve the sharp CKN inequalities \eqref{CKN}  from \cite{CC} by relaxing the gradient term $|\nabla u|$ with the radial derivate term $|\partial_r u|=\frac{|x\cdot \nabla u|}{|x|}$ (see \eqref{CKN_refined}).
    \item We provide a very short and compact proof of the refined CKN inequalities.
      As a consequence, this provides an elementary and spherical harmonic-free proof for the results in \cite{CC}.
    \item We rigorously define the notion of minimizers and show their membership to suitable weighted Sobolev spaces.    
  \end{itemize}

  \section{Main results}\label{sec_results}

  In order to establish the proper functional setting for our main results we introduce the space $\mathcal{H}_{a, b}^1$ defined by
  \begin{equation}\label{weight_Sob2}
    \mathcal{H}_{a, b}^1:=\overline{C_0^\infty(\rr^N\setminus\{0\})}^{\|\cdot\|_{\mathcal{H}_{a, b}^1}} , \  \textrm{ where }\|u\|_{\mathcal{H}_{a, b}^1}:= \left(\int_{\rr^N}\left [\frac{|x\cdot \nabla u|^2}{|x|^{2b+2}}+ \frac{|u|^2}{|x|^{2a}}\right]\dx  \right)^{1/2};
  \end{equation}   
  i.e., $\mathcal{H}_{a,b}^{1}$ is the completion of $C_0^\infty(\rr^N\setminus\{0\})$ with respect to the weighted Sobolev norm  $\|\cdot\|_{\mathcal{H}_{a, b}^1}$. 

  Next we remark that the best constant in the refined $L^2$-CKN inequalities \eqref{CKN_refined} may be computed as
  \begin{equation}\label{best_constant_refined}
    \tilde{C}(N, a, b):=\inf_{u\in C_0^\infty(\rr^N\setminus\{0\}), u\neq 0} \tilde{E}(u),\ \textrm{ where } \tilde{E}(u):=\frac{ \left(\int_{\rr^N} \frac{|u|^2}{|x|^{2a}} \dx\right)^{\frac 1 2} \left(\int_{\rr^N} \frac{|x\cdot \nabla u|^2}{|x|^{2b+2}}\dx\right)^{\frac 1 2}}{\int_{\rr^N} \frac{|u|^2}{|x|^{a+b+1}}\dx}. 
  \end{equation} 
  Of course, we also have 
  $\tilde{C}(N, a, b):=\inf_{u\in \mathcal{H}_{a, b}^1, u\neq 0} \tilde{E}(u)$. 

  We may now state our main result which refines Theorem \ref{ThCC}.

  \begin{theorem}\label{ThCC_refined}
    According to the location of the points $(a, b)$ in the plane, we have: 
    \begin{enumerate}[(a)]
      \item In the region $\mathcal{A}$, the best constant is $\tilde{C}(N, a, b)=\frac{|N-(a+b+1)|}{2}$ and it is achieved in $\mathcal{H}_{a, b}^1$ by the functions 
	\begin{equation}\label{min1}
	  u_{\mathcal{A}}(x)=D\left(\frac{x}{|x|}\right) \exp\left(\frac{t|x|^{b+1-a}}{b+1-a}\right),	\end{equation}
	with $t<0$ in $\mathcal{A}_1$ and $t>0$ in $\mathcal{A}_2$, and $D:S^{N-1}\rightarrow \rr$ is such that $x\mapsto D\left(\frac{x}{|x|}\right)\in C^1(\rr^N\setminus\{0\})$.  
      \item In the region $\mathcal{B}$, the best constant is $\tilde{C}(N, a, b)=\frac{|N-(3b-a+3)|}{2}$ and it is achieved in $\mathcal{H}_{a, b}^1$ by the functions 
	\begin{equation}\label{min2}
	  u_{\mathcal{B}}(x)=D\left(\frac{x}{|x|}\right) |x|^{2(b+1)-N}\exp\left(\frac{t|x|^{b+1-a}}{b+1-a}\right), 
	\end{equation}
	with $t>0$ in $\mathcal{B}_1$ and $t<0$ in $\mathcal{B}_2$, and $D:S^{N-1}\rightarrow \rr$ is such that  $x\mapsto D\left(\frac{x}{|x|}\right)\in C^1(\rr^N\setminus\{0\})$.
      \item In the degenerate case $a=b+1$, the CKN type inequality \eqref{CKN_refined} reduces to the weighted Hardy inequality
	\begin{equation}\label{Hardy_refined}
	  \int_{\rr^N} \frac{|x\cdot \nabla u|^2}{|x|^{2b+2}}\dx \geq \tilde{C}^2(N, b+1, b) \int_{\rr^N} \frac{|u|^2}{|x|^{2b+2}}\dx.
	\end{equation}
	The best constant is $\tilde{C}(N, b+1, b)=\frac{|N-2(b+1)|}{2}$ and is not achieved in $\mathcal{H}_{b+1, b}^1$. 
    \end{enumerate}	
  \end{theorem}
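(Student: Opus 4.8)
The plan is to extract the whole theorem from a single ``expand of squares'' computation. Since $|x\cdot\nabla u|^2=|x\cdot\nabla(\operatorname{Re}u)|^2+|x\cdot\nabla(\operatorname{Im}u)|^2$ and $|u|^2=|\operatorname{Re}u|^2+|\operatorname{Im}u|^2$, a short Cauchy--Schwarz argument reduces \eqref{CKN_refined} to real-valued $u$, so fix a real $u\in C_0^\infty(\rr^N\setminus\{0\})$. For parameters $\alpha,s\in\rr$ I expand the nonnegative quantity
\[
0\le\int_{\rr^N}\Bigl|\tfrac{x\cdot\nabla u}{|x|}-\bigl(\tfrac{\alpha}{|x|}+s|x|^{b-a}\bigr)u\Bigr|^2\,\frac{\dx}{|x|^{2b}},
\]
integrating the cross term by parts via $\div\bigl(W(|x|)\tfrac{x}{|x|}\bigr)=W'(|x|)+\tfrac{N-1}{|x|}W(|x|)$ (no boundary terms, since $\operatorname{supp}u$ is a compact subset of $\rr^N\setminus\{0\}$). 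Writing $A=\int_{\rr^N}u^2|x|^{-2a}\dx$, $B=\int_{\rr^N}|x\cdot\nabla u|^2|x|^{-2b-2}\dx$, $G=\int_{\rr^N}u^2|x|^{-(a+b+1)}\dx$ and $P=\int_{\rr^N}u^2|x|^{-2b-2}\dx$, this yields, for all $\alpha,s$,
\[
B\ \ge\ -k_1(\alpha)\,P\ -\ k_2(\alpha)\,s\,G\ -\ s^2A,\qquad k_1(\alpha)=\alpha\bigl(\alpha+N-2-2b\bigr),\quad k_2(\alpha)=2\alpha+N-(a+b+1),
\]
with equality precisely when $x\cdot\nabla u=\bigl(\alpha+s|x|^{b+1-a}\bigr)u$ a.e. To get the lower bound for $\tilde C(N,a,b)$ I take $\alpha$ to be a root of $k_1$, namely $\alpha=0$ or $\alpha=2(b+1)-N$, so the uncontrolled quantity $P$ drops out, and then maximize the resulting concave quadratic in $s$ (at $s=-k_2(\alpha)G/(2A)$), getting $AB\ge\tfrac14 k_2(\alpha)^2 G^2$, i.e.\ $\tilde E(u)\ge\tfrac12|k_2(\alpha)|$. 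Taking $\alpha=0$ gives $\tilde E(u)\ge\tfrac12|N-(a+b+1)|$ and $\alpha=2(b+1)-N$ gives $\tilde E(u)\ge\tfrac12|N-(3b-a+3)|$, both valid on all of $\rr^2$; this establishes the claimed values as lower bounds for $\tilde C(N,a,b)$ in (a), resp.\ (b). When $a=b+1$ one has $A=G=P$, so the displayed inequality with $\alpha=0$ is precisely \eqref{Hardy_refined} with constant $\tfrac12|N-2(b+1)|$.

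For attainment, the equality case of that inequality is an ODE: with $(\alpha,s)=(0,t)$ it integrates to $u_{\mathcal A}$ as in \eqref{min1}, and with $(\alpha,s)=(2(b+1)-N,\,t)$ to $u_{\mathcal B}$ as in \eqref{min2}, where the $C^1$ angular profile $D$ is otherwise free because only $\partial_r u$ enters. Since $\partial_r u_{\mathcal A}=t|x|^{b-a}u_{\mathcal A}$, a direct computation (e.g.\ integrating $\partial_r(u_{\mathcal A}^2)$ against $|x|^{-(a+b)}$ with the same divergence identity, or substituting $v=|x|^{b+1-a}$) gives $2tA=-(N-(a+b+1))G$ and $B=t^2A$; similarly $2tA=(N-(3b-a+3))G$ for $u_{\mathcal B}$, whose $B$ is read off from the equality form of the bound above. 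Substituting yields $\tilde E(u_{\mathcal A})=\tfrac12|N-(a+b+1)|$ and $\tilde E(u_{\mathcal B})=\tfrac12|N-(3b-a+3)|$ whenever $A,B,G$ are finite and positive --- and this proviso is exactly where membership of $(a,b)$ in $\mathcal A_1,\mathcal A_2,\mathcal B_1,\mathcal B_2$ is used: the sign of $t$ stated in the theorem is the one making the exponential factor integrable at whichever end of $(0,\infty)$ carries $|x|^{b+1-a}\to\infty$, and the conditions $b\le(N-2)/2$, resp.\ $b\ge(N-2)/2$, make the power weights integrable at the other end. Combined with the lower bound, this pins down $\tilde C(N,a,b)$ in (a) and (b) and shows $u_{\mathcal A}$, resp.\ $u_{\mathcal B}$, realizes it.

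It then remains to show $u_{\mathcal A},u_{\mathcal B}\in\mathcal H^1_{a,b}$, i.e.\ that they are $\|\cdot\|_{\mathcal H^1_{a,b}}$-limits of $C_0^\infty(\rr^N\setminus\{0\})$ functions. I would multiply by radial cutoffs $\chi_\varepsilon$ equal to $1$ on a large annulus, supported in a slightly larger one, with $|x\cdot\nabla\chi_\varepsilon|\le C$: by dominated convergence $\chi_\varepsilon u\to u$ in $L^2(|x|^{-2a}\dx)$ and $\chi_\varepsilon(x\cdot\nabla u)\to x\cdot\nabla u$ in $L^2(|x|^{-2b-2}\dx)$, and the only nontrivial term, $(x\cdot\nabla\chi_\varepsilon)u$, tends to $0$ in $L^2(|x|^{-2b-2}\dx)$ as soon as $u\in L^2(|x|^{-2b-2}\dx)$, which one checks holds on the interiors of $\mathcal A$ and $\mathcal B$. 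On the critical line $b=(N-2)/2$ the relevant integral diverges only logarithmically, and one repairs the estimate by using logarithmic cutoffs on the offending end, so that $\int|x\cdot\nabla\chi_\varepsilon|^2u^2|x|^{-2b-2}\dx\to0$ still. Finally, in the degenerate case $a=b+1$, a nonzero minimizer in $\mathcal H^1_{b+1,b}$ would --- by the equality case in the expand-of-squares computation, now valid on $\mathcal H^1_{b+1,b}$ since there $P=A$ is controlled by the norm --- satisfy $x\cdot\nabla u=c\,u$, hence $u(x)=D(x/|x|)|x|^{c}$, whose weighted norm lies in $\{0,\infty\}$; so no nonzero minimizer exists and $\tfrac12|N-2(b+1)|$ is not attained in $\mathcal H^1_{b+1,b}$.

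The step I expect to be the main obstacle is this last one: making the $\|\cdot\|_{\mathcal H^1_{a,b}}$-approximation go through uniformly across all four sub-regions, in particular on the line $b=(N-2)/2$, since it is precisely the point left unaddressed in \cite{CC}. By contrast the expand-of-squares identity itself is short, and the bookkeeping over $\mathcal A_1,\mathcal A_2,\mathcal B_1,\mathcal B_2$ for the constants and for the integrability of $u_{\mathcal A},u_{\mathcal B}$, though fiddly, is routine.
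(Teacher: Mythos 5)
Your proof is, up to renaming the parameters ($\alpha,s$ for the paper's $s,t$), the same expand-of-squares argument the paper uses: the three-term square with the choice $\alpha\in\{0,\,2(b+1)-N\}$ eliminating the uncontrolled term $\int u^2|x|^{-2b-2}\dx$, the same optimization in the remaining parameter, the same equality ODE producing \eqref{min1}--\eqref{min2}, the same computation of $\tilde E(u_{\mathcal A})$ and $\tilde E(u_{\mathcal B})$ via the divergence identity, and the same logarithmic-cutoff device for showing membership in $\mathcal{H}^1_{a,b}$ (the paper uses mollified log cutoffs at both ends and checks $\mathcal A_1,\mathcal A_2,\mathcal B_1,\mathcal B_2$ separately; your variant with ordinary cutoffs off the line $b=(N-2)/2$ and log cutoffs on the offending end is fine, and your bookkeeping --- keep both lower bounds on all of $\rr^2$ and let the trial functions select the relevant one --- is if anything cleaner than the paper's min/max comparison of the two constants). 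Two minor repairs you should make explicit: since $D$ is only $C^1$, the truncation $\chi_\varepsilon u$ lies in $C^1_0(\rr^N\setminus\{0\})$, not $C^\infty_0$, so a final mollification on its compact support away from the origin is needed; and in the non-attainment argument for $a=b+1$ the pointwise ODE is being applied to a general element of the completion $\mathcal H^1_{b+1,b}$, so the equality identity has to be extended by density and the conclusion drawn for the weak radial derivative --- you gesture at this but it needs to be said.

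The genuine gap is in part (c). You establish that \eqref{Hardy_refined} holds with the constant $\frac{|N-2(b+1)|}{2}$, i.e.\ $\tilde C(N,b+1,b)\ge\frac{|N-2(b+1)|}{2}$, and that no minimizer exists, but the statement also asserts that $\frac{|N-2(b+1)|}{2}$ \emph{is} the best constant, and nothing in your proposal shows $\tilde C(N,b+1,b)\le\frac{|N-2(b+1)|}{2}$. In (a) and (b) this direction came from the explicit extremizers, but on the line $a=b+1$ those leave the space; non-attainment does not determine the value of the infimum, and continuity in $(a,b)$ does not help either (an infimum of functionals continuous in the parameters is only upper semicontinuous, which gives the wrong inequality). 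The paper closes this by rewriting the deficit as $\int_{\rr^N}\bigl|\partial_r\bigl(u\,|x|^{(N-2b-2)/2}\bigr)\bigr|^2|x|^{2-N}\dx$ and testing with $u_\epsilon=|x|^{-(N-2b-2)/2}\phi_\epsilon$, where $\phi_\epsilon$ is a (mollified) logarithmic cutoff: the deficit is $O\bigl((\log\tfrac1\epsilon)^{-1}\bigr)$ while $\int_{\rr^N}|u_\epsilon|^2|x|^{-2b-2}\dx\sim\log\tfrac1\epsilon$, so $\{u_\epsilon\}$ is a minimizing sequence. You need to add such a sequence (or an equivalent argument) to complete case (c).
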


  \begin{remark}\rm  Theorem \ref{ThCC_refined} implies Theorem \ref{ThCC}. 
  \end{remark}  
  Indeed, the diffeomorphism $\mathcal{L}: (0, \infty)\times S^{N-1}\rightarrow\rr^N\setminus\{0\}$, $\mathcal{L}(r, \sigma)=r \sigma$ allows us to write in spherical coordinates:
  $$x= r \sigma, \quad r:=|x|, \quad \sigma:=\frac{x}{|x|}.$$
  Then the radial derivate operator is defined by 
  $$\mathcal{R}:=\partial_r= \frac{x}{|x|}\cdot \nabla$$	 
  and so 
  $$\|u\|_{\mathcal{H}_{a, b}^1}= \left(\int_{\rr^N}\left [\frac{|\mathcal{R} u|^2}{|x|^{2b}}+ \frac{|u|^2}{|x|^{2a}}\right]\dx  \right)^{1/2}.$$
  Since
  $$|\mathcal{R} u|\leq |\nabla u|, \quad \forall u \in C_0^\infty(\rr^N\setminus\{0\}),$$
  we have $\|u\|_{\mathcal{H}_{a, b}^1}\leq \|u\|_{H_{a, b}^1}$ and therefore $H_{a, b}^1 \subset \mathcal{H}_{a, b}^1$.

  As a consequence of this, it is straightforward to see that Theorem \ref{ThCC_refined} implies Theorem \ref{ThCC} by taking the functions $D=D(x/|x|)$ in the expressions of the minimizers as constant functions in which case we have $|\nabla u|=|\mathcal{R} u|$ and $\|u\|_{\mathcal{H}_{a, b}^1}= \|u\|_{H_{a, b}^1}$.   	
  \begin{remark}\rm
    Observe that the minimizers of inequality \eqref{CKN_refined} in Theorem \ref{ThCC_refined} are not necessary radially symmetric, which is in contrast to the minimizers of inequality \eqref{CKN} since the minimizers of \eqref{CKN} are always radially symmetric. 
  \end{remark}	 

  \section{Proof of Theorem \ref{CKN_refined}}\label{proofs}    
  We prove Theorem \ref{CKN_refined} in four main steps.  The first step in the proof is the following lemma.   
  \subsection{A key inequality}
  \begin{lemma}\label{key-estimates}
    We have that 
    \begin{multline}\label{our_CKN}
      \int_{\rr^N} \frac{|u|^2}{|x|^{2a}} \dx \int_{\rr^N} \frac{|x\cdot \nabla u|^2}{|x|^{2b+2}}\dx \\ \geq \min\left\{\frac{|N-(a+b+1)|^2}{4}, \frac{|N-(3b-a+3)|^2}{4} \right\} \left(\int_{\rr^N} \frac{|u|^2}{|x|^{a+b+1}}\dx\right)^2, 
    \end{multline}
    for any  $a, b\in \rr$  and any $u\in C_0^\infty(\rr^N\setminus\{0\})$. In consequence, 
    \begin{equation}\label{key_const}\tilde{C}^2(N, a, b)\geq \min\left\{\frac{|N-(a+b+1)|^2}{4}, \frac{|N-(3b-a+3)|^2}{4} \right\}.
    \end{equation}
    (In fact, we will see later that equality holds in \eqref{key_const}.)
  \end{lemma}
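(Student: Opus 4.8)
The plan is to obtain \eqref{our_CKN} from a single \emph{expand of squares} computation carrying two free parameters $\kappa,\lambda\in\R$, together with one elementary integration by parts. Throughout write $\partial_r u=\frac{x\cdot\nabla u}{|x|}$, so that $\frac{|x\cdot\nabla u|^2}{|x|^{2b+2}}=\frac{|\partial_r u|^2}{|x|^{2b}}$. The identity I will use repeatedly is
\begin{equation}\label{pf:ibp}
  \int_{\rr^N}\frac{u\,\partial_r u}{|x|^{s}}\dx=\frac12\int_{\rr^N}\frac{x\cdot\nabla(u^2)}{|x|^{s+1}}\dx=-\frac{N-1-s}{2}\int_{\rr^N}\frac{u^2}{|x|^{s+1}}\dx ,
\end{equation}
valid for every $s\in\R$ and every $u\in C_0^\infty(\rr^N\setminus\{0\})$, since $\div\big(x|x|^{-s-1}\big)=(N-1-s)|x|^{-s-1}$ and all boundary contributions at the origin and at infinity vanish (the integrands are bounded with compact support away from $0$).

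I would then expand the inequality
\[
  0\le\int_{\rr^N}\frac{1}{|x|^{2b}}\Bigl|\partial_r u-\Bigl(\frac{\kappa}{|x|}+\lambda|x|^{b-a}\Bigr)u\Bigr|^2\dx .
\]
The precise combination $\frac{\kappa}{|x|}+\lambda|x|^{b-a}$ is not arbitrary: it is the radial logarithmic derivative of the conjectured extremizers \eqref{min1}--\eqref{min2}, so that the square above vanishes identically for those functions. Expanding the square, the two cross terms are evaluated by \eqref{pf:ibp} with $s=2b+1$ and with $s=a+b$, and after collecting terms one reaches the master inequality
\begin{multline}\label{pf:master}
  \int_{\rr^N}\frac{|x\cdot\nabla u|^2}{|x|^{2b+2}}\dx+\bigl(\kappa^2+\kappa(N-2b-2)\bigr)\int_{\rr^N}\frac{u^2}{|x|^{2b+2}}\dx\\
  +\lambda\bigl(2\kappa+N-(a+b+1)\bigr)\int_{\rr^N}\frac{u^2}{|x|^{a+b+1}}\dx+\lambda^2\int_{\rr^N}\frac{u^2}{|x|^{2a}}\dx\ \ge\ 0 ,
\end{multline}
for all $\kappa,\lambda\in\R$ and all $u\in C_0^\infty(\rr^N\setminus\{0\})$.

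Next I would choose $\kappa$ to annihilate the unwanted term $\int u^2/|x|^{2b+2}$, i.e.\ $\kappa$ a root of $\kappa^2+\kappa(N-2b-2)=0$: either $\kappa=0$ or $\kappa=2b+2-N$. For $\kappa=0$, \eqref{pf:master} says that a certain quadratic polynomial in $\lambda$ with positive leading coefficient $\int u^2/|x|^{2a}$ (for $u\not\equiv0$) is everywhere nonnegative; the nonpositivity of its discriminant is exactly
\[
  \int_{\rr^N}\frac{u^2}{|x|^{2a}}\dx\int_{\rr^N}\frac{|x\cdot\nabla u|^2}{|x|^{2b+2}}\dx\ \ge\ \frac{|N-(a+b+1)|^2}{4}\Bigl(\int_{\rr^N}\frac{u^2}{|x|^{a+b+1}}\dx\Bigr)^2 .
\]
For $\kappa=2b+2-N$ one computes $2\kappa+N-(a+b+1)=-(N-(3b-a+3))$, so the identical discriminant argument applied to \eqref{pf:master} yields the companion bound with $|N-(3b-a+3)|^2/4$ in place of $|N-(a+b+1)|^2/4$. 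Both hold simultaneously, so in particular the smaller of the two constants is a valid lower bound — this is precisely \eqref{our_CKN} — and \eqref{key_const} follows by dividing by $\bigl(\int u^2/|x|^{a+b+1}\bigr)^2$ and taking the infimum over $u$ as in \eqref{best_constant_refined}. (Equivalently, the $\kappa=2b+2-N$ bound can be read off from the $\kappa=0$ bound via the invariance of $\tilde E$ under $u(x)\mapsto|x|^{2b+2-N}u(x/|x|^2)$, which replaces $(a,b)$ by $(2b+2-a,b)$ and interchanges $|N-(a+b+1)|$ with $|N-(3b-a+3)|$.)

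I do not anticipate a genuine obstacle here: \eqref{pf:master} is a routine, if slightly lengthy, expansion, and the only creative ingredient is the choice of the two-parameter square, which is dictated by the shape of the extremizers. The assertion that equality holds in \eqref{key_const} will be justified later, once the extremizers \eqref{min1}--\eqref{min2} are shown to lie in $\mathcal H^1_{a,b}$: for them the square above vanishes for the appropriate pair $(\kappa,\lambda)$, so that the discriminant inequality is saturated.
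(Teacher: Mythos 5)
Your proposal is correct and follows essentially the same route as the paper: your two-parameter square in $(\kappa,\lambda)$ is exactly the paper's $S$ with $(s,t)=(-\kappa,-\lambda)$, you use the same divergence-theorem identity for the cross terms, and the same two choices $\kappa=0$ and $\kappa=2b+2-N$ (the paper's $s=0$ and $s=N-2b-2$) followed by the nonpositive-discriminant argument in the remaining free parameter. Both resulting bounds hold for every $(a,b)\in\rr^2$ and every $u\in C_0^\infty(\rr^N\setminus\{0\})$, so passing to the smaller of the two constants yields \eqref{our_CKN} and hence \eqref{key_const}, just as in the paper.
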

  \begin{proof}[Proof of Lemma \ref{key-estimates}]

    We apply the \textit{expanding the square} method for three terms as follows (note that, in \cite{Co}, the authors used only two terms in the expansion). Namely, 
    the square that will be expanded takes the form
    \[
      S:=\int_{\rr^N}\left||x|^{-b-1}x\cdot \nabla{u}+t|x|^{-a}{u}+s|x|^{-b-1}u\right|^{2} \dx,
    \]
    where $s,t$ are real parameters to be chosen. 

    Expanding $S$ we get
    \begin{align*}
      S &  =\int_{\rr^N}\frac{|x\cdot \nabla {u}|^{2}}{|x|^{2b+2}} +t^{2}\frac{|{u}|^{2}}{|x|^{2a}%
    }+s^{2}\frac{|{u}|^{2}}{|x|^{2b+2}}\dx\\
    &  +2t\int_{\rr^N}\frac{{u}}{|x|^{a+b+1}}x\cdot\nabla{u}\dx +2s\int_{\rr^N}\frac{{u}}{|x|^{2b+2}%
  }x\cdot\nabla{u}\dx+2st\int_{\rr^N}\frac{|{u}|^{2}}{|x|^{a+b+1}}\dx.
\end{align*}
Using the divergence theorem, we have for any real $d$ that
\[%
%TCIMACRO{\dint \limits_{\mathbb{R}^{N}}}%
%BeginExpansion
  {\displaystyle\int_{\rr^N}}
%EndExpansion
  \frac{1}{\left\vert x\right\vert ^{d+1}}u\left(  x\cdot\nabla u\right)
  \dx=-\frac{\left(  N-d-1\right)  }{2}%
%TCIMACRO{\dint \limits_{\mathbb{R}^{N}}}%
%BeginExpansion
  {\displaystyle\int_{\rr^N}}
%EndExpansion
  \frac{\left\vert u\right\vert ^{2}}{\left\vert x\right\vert ^{d+1}}\dx.
\]
Combining these computations we obtain
\begin{align*}
  S &  =\int_{\rr^N}\frac{|x\cdot \nabla{u}|^{2}}{|x|^{2b+2}}+t^{2}\frac{|{u}|^{2}}{|x|^{2a}%
}+s^{2}\frac{|{u}|^{2}}{|x|^{2b+2}}\dx\\
&  -\left(  N-a-b-1\right)  t\int_{\rr^N}\frac{\left\vert u\right\vert ^{2}%
}{|x|^{a+b+1}}\dx\\
&  -\left(  N-2b-2\right)  s\int_{\rr^N}\frac{\left\vert u\right\vert ^{2}%
}{|x|^{2b+2}}\dx\\
&  +2st\int_{\rr^N}\frac{|{u}|^{2}}{|x|^{a+b+1}}\dx.
\end{align*}
Or
\begin{align}\label{imp_Id}
  S  & =\int_{\rr^N}\frac{|x\cdot \nabla u|^{2}}{|x|^{2b+2}}\dx +t^{2}\int_{\R^{N}}\frac{|{u}|^{2}}{|x|^{2a}%
}\dx+t\left[  2s-\left(  N-a-b-1\right)  \right]  \int_{\rr^N}\frac{\left\vert
  u\right\vert ^{2}}{|x|^{a+b+1}}\dx\nonumber\\
  & +s\left[  s-\left(  N-2b-2\right)  \right] \int_{\rr^N} \frac{|{u}|^{2}}{|x|^{2\left(
  b+1\right)  }}\dx.
\end{align}
Observe that all the terms that we need are in the first line of \eqref{imp_Id}.  Therefore we need to choose $s=0$ or $s=N-2b-2$ to make the last term
disappear.

\textit{The case $s=0$.} Then we get 
\begin{align}\label{relation_1}
  0\leq S &=\int_{\rr^N}\left||x|^{-b-1}x\cdot \nabla{u}+t|x|^{-a}{u}\right|^{2} \dx \nonumber\\
  &=t^{2}\int_{\R^{N}}\frac{|{u}|^{2}}{|x|^{2a}%
}\dx-t\left(  N-a-b-1\right)  \int_{\rr^N}\frac{\left\vert
  u\right\vert ^{2}}{|x|^{a+b+1}}\dx + \int_{\rr^N}\frac{|x\cdot \nabla{u}|^{2}}{|x|^{2b+2}}\dx, \nonumber\\
  &:=A t^2 -(N-a-b-1)B t + C, \quad \forall t\in \rr.
\end{align}
From \eqref{relation_1} we must have non-positive discriminant, i.e.,  $\delta:= (N-a-b-1)^2B^2-4AC\leq 0$ which is equivalent to  
\begin{equation}\label{our_CKN_1}
  \int_{\rr^N} \frac{|u|^2}{|x|^{2a}} \dx \int_{\rr^N} \frac{|x\cdot \nabla u|^2}{|x|^{2b+2}}\dx \geq \frac{|N-(a+b+1)|^2}{4} \left(\int_{\rr^N} \frac{|u|^2}{|x|^{a+b+1}}\dx\right)^2.
\end{equation}
The equality in \eqref{our_CKN_1} (i.e. $\delta=0$) implies  
\begin{equation}\label{equality}
  S=0 \textrm{ with the double root } t:=\frac{(N-a-b-1)}{2}\frac{\int_{\R^{N}} \frac{|u|^2}{|x|^{a+b+1}}\dx }{\int_{\R^{N}} \frac{|u|^2}{|x|^{2a}}\dx}.  
\end{equation} 
In fact, this extends to any $t$ such that $\textrm{sgn } t(N-a-b-1)=1$ unless $a=b+1$, since  \eqref{our_CKN_1} is invariant under dilatations (if $u$ is a minimizer also $x\mapsto u(\lambda x)$ for any $\lambda\neq 0$ is a minimizer). 

\textit{The case $s=N-2b-2$.} Then from \eqref{imp_Id} we get 
\begin{multline}\label{relation_2}
  0\leq S=\int_{\rr^N}\left||x|^{-b-1}x\cdot \nabla{u}+t|x|^{-a}{u} + (N-2b-2)|x|^{-b-1} u\right|^{2} \dx \\
  =\int_{\rr^N}\frac{|x\cdot \nabla{u}|^{2}}{|x|^{2b+2}}\dx +t^{2}\int_{\R^{N}}\frac{|{u}|^{2}}{|x|^{2a}%
}\dx +t\left(  N-3b+a-3\right)  \int_{\rr^N}\frac{\left\vert
  u\right\vert^{2}}{|x|^{a+b+1}}\dx, \quad \forall t\in \rr
\end{multline}
Similarly as in the case $s=0$,  \eqref{relation_2} implies 
\begin{equation}\label{our_CKN_2}
  \int_{\rr^N} \frac{|u|^2}{|x|^{2a}} \dx \int_{\rr^N} \frac{|x\cdot \nabla u|^2}{|x|^{2b+2}}\dx \geq \frac{|N-(3b-a+3)|^2}{4} \left(\int_{\rr^N} \frac{|u|^2}{|x|^{a+b+1}}\dx\right)^2.
\end{equation}
The equality in \eqref{our_CKN_2} implies  that (from \eqref{relation_2}) 
\begin{equation}\label{equality2}
  S=0 \textrm{ with the double root } t:=-\frac{(N-3b+a-3)}{2}\frac{\int_{\R^{N}} \frac{|u|^2}{|x|^{a+b+1}}\dx }{\int_{\R^{N}} \frac{|u|^2}{|x|^{2a}}\dx}.  
\end{equation} 
In fact, this extends to any $t$ such that $\textrm{sgn } t(N-3b+a-3)=1$ unless $a=b+1$, since  \eqref{our_CKN_2} is invariant under dilatations (if $u$ is a minimizer also $x\mapsto u(\lambda x)$ for any $\lambda\neq 0$ is a minimizer).
Consequently, combining \eqref{our_CKN_1}-\eqref{our_CKN_2} we finally obtain \eqref{our_CKN} and the proof of this lemma is complete.
\end{proof} 

The second step in the proof consists of showing that the right hand side in \eqref{our_CKN} coincides with the best constant $\tilde{C}(N, a,b)$ claimed in Theorem \ref{ThCC_refined} for all parameter values.
\subsection{The constant $\tilde{C}(N, a, b)$}
Let us establish explicitly the constant in the right hand side in \eqref{our_CKN}. For that we have to see when 
$|N-(a+b+1)|\geq |N-(3b-a+3)|$ or the reverse.
Easy computations lead to 
\begin{multline}
  T:=|N-(a+b+1)|^2-|N-(3b-a+3)|^2=(b-a+1)(N-2-2b)
\end{multline} 
We note that $T\geq 0$ when $(a, b)\in \mathcal{B}$ and $T\leq 0$ when $(a, b)\in \mathcal{A}$.
So, 	\begin{equation}\label{explicit}\min\left\{\frac{|N-(a+b+1)|^2}{4}, \frac{|N-(3b-a+3)|^2}{4} \right\}=\left\{\begin{array}{cc}
    \frac{|N-(a+b+1)|^2}{4}, & \textrm{ when } (a, b)\in \mathcal{A} \\[6pt]
    \frac{|N-(3b-a+3)|^2}{4}, & \textrm{ when } (a, b)\in \mathcal{B}
  \end{array}\right. .
\end{equation}
Notice that the constant in \eqref{explicit} is always positive. 

In the degenerate case $a=b+1$,  in which case $(a, b)\not \in \mathcal{A}\cup \mathcal{B}$, we obtain $T=0$. So, in this case the two constants are equal to $\frac{|N-2(b+1)|}{2}$. \hfill	$\square$ \\

\medskip
The third step in the proof is to determine the candidate minimizers for the best constant $\tilde{C}(N, a,b)$. 

\subsection{Determination of minimizers}\label{minimizers}

\textit{The case $\tilde{C}(N, a, b)=\left|\frac{N-(a+b+1)}{2}\right|$}.  In view of \eqref{our_CKN_1} the possible minimizers must satisfy the first order PDE
$$|x|^{-b-1}x\cdot \nabla{u}+t|x|^{-a}{u}=0, \quad x\neq 0,$$
for some real value $t$ as in \eqref{equality}. Dividing by $u\neq 0$ we get 
\begin{equation}\label{pde1}
  \frac{\partial_r u}{u}=-t  |x|^{b-a}=-t r^{b-a}.
\end{equation}
$\bullet$  If $b-a+1\neq 0$ then our PDE is equivalent to 
$$\partial_r \left(\log |u|+ t \frac{r^{b-a+1}}{b-a+1}\right)=0.$$
This implies $\log |u|+ t \frac{r^{b-a+1}}{b-a+1}=C_2(\sigma)$ (constant in $r$) from where we obtain the solutions $$u(x)=D\left(\frac{x}{|x|}\right) \exp \left(-\frac{t}{b-a+1}|x|^{b-a+1}\right).$$

Direct computations lead to  $\|u\|_{\mathcal{H}_{a, b}^1}<\infty$ if $\textrm{sgn}(\frac{t}{b-a+1})=1$, $(a, b)\in \mathcal{A}$ and $x\mapsto D (\frac{x}{|x|})\in C^1(\rr^N\setminus\{0\})$.   

$\bullet$ Otherwise, if $b-a+1=0$ then \eqref{pde1} reduces to 
\begin{equation}\label{ODE1}
  \frac{\partial_r u}{u}=-\frac{t}{r},
\end{equation}  
or equivalently $$\partial_r (\log |u|+t \log r)=0.$$
Notice that in view of \eqref{our_CKN_1} $t$ becomes an explicit constant, namely $t=\frac{N-2b-2}{2}$.  
This implies $\log (|u|r^{t})=C_1(\sigma)$ (constant with respect to $r$) and so $u(x)=\alpha(x/|x|) |x|^{-t}$ for some function $\alpha$ depending only on the spherical part.
By direct computation, we obtain $\|u\|_{\mathcal{H}_{b+1, b}^1}=\infty$ and there are not minimizers in this case. 

In fact, the case $a=b+1$ leads to a degenerate inequality in  \eqref{our_CKN} which reduces to the weighted Hardy type inequality 
$$  \int_{\rr^N} \frac{|x\cdot \nabla u|^2}{|x|^{2b+2}}\dx \geq \tilde{C}^2(N, b+1, b)\int_{\rr^N} \frac{|u|^2}{|x|^{2(b+1)}}\dx.$$
According to \eqref{relation_1} and the discussion above we obtain 
\begin{equation}\label{ident_Hardy}
  \int_{\rr^N} \frac{|x\cdot \nabla u|^2}{|x|^{2b+2}}\dx - \tilde{C}^2(N, b+1, b)\int_{\rr^N} \frac{|u|^2}{|x|^{2(b+1)}}\dx= \int_{\R^{N}} \left|\partial_r \left(u |x|^{\frac{N-2b-2}{2}}\right)\right|^2 |x|^{2-N} \dx.
\end{equation}
Due to \eqref{ident_Hardy} we obtain that  $\tilde{C}(N, b+1, b)=\frac{|N-2(b+1)|}{2}$ is indeed the best constant. To argue, for $ \epsilon> 0$ small enough, we can take a minimizing sequence $\{u_\epsilon\}_{\epsilon>0}\subset C_0^\infty(\rr^N\setminus\{0\})$ of the form 
$$u_\epsilon(x)=|x|^{-\frac{N-2b-2}{2}}\phi_\epsilon(x), $$
where $\theta_\epsilon\in C_c^\infty(\rr^N\setminus\{0\})$ is a cut-off function as in \eqref{reg_apr_seq}.  
Since, as $ \epsilon\searrow 0$, we have
$$\int_{\R^{N}} \left|\partial_r \left(u_\epsilon |x|^{\frac{N-2b-2}{2}}\right)\right|^2 |x|^{2-N} \dx=O\left(\left(\log \frac{1}{\epsilon}\right)^{-1}\right) \textrm{ and } \int_{\rr^N} \frac{|u_\epsilon|^2}{|x|^{2(b+1)}}\dx=O\left(\log\frac{1}{\epsilon}\right). $$
In view of \eqref{ident_Hardy} we have that $\{u_\epsilon\}_{\epsilon>0}$ is a minimizing sequence for the constant $\tilde{C}(N, b+1, b)=\frac{|N-2(b+1)|}{2}$. The details are left to the reader.
\\
\textit{The case $C(N, a, b)=\left|\frac{N-(3b-a+3)}{2}\right|$}.  In view of \eqref{relation_2} the possible minimizers must satisfy the first order PDE
$$|x|^{-b-1}x\cdot \nabla{u}+t|x|^{-a}{u}+(N-2b-2) u |x|^{-b-1}=0, \quad x\neq 0,$$
for some real value $t$. Dividing by $u\neq 0$ we get 
\begin{equation}\label{pde2}
  \frac{\partial_r u}{u}=-t |x|^{b-a}-(N-2b-2) \frac{1}{|x|}.
\end{equation}
$\bullet$ If $b-a+1\neq 0$ then our PDE \eqref{pde2} is equivalent to 
$$\partial_r \left(\log |u|+ t \frac{r^{b-a+1}}{b-a+1}+(N-2b-2)\log r\right)=0.$$
This implies $\log |u|+ t \frac{r^{b-a+1}}{b-a+1} +(N-2b-2)\log r=D_2(\sigma)$ (constant in $r$) from where we obtain the solutions $$u(x)=D\left(\frac{x}{|x|}\right)|x|^{-(N-2b-2)} \exp \left(-\frac{t}{b-a+1}|x|^{b-a+1}\right).$$ Direct computations lead to  $\|u\|_{\mathcal{H}_{a, b}^1}<\infty$   if $\textrm{sgn}(\frac{t}{b-a+1})=1$, $(a, b)\in \mathcal{B}$ and  $x\mapsto D (\frac{x}{|x|})\in C^1(\rr^N\setminus\{0\})$. \\

$\bullet$ Otherwise, if $b-a+1=0$ then this reduces to 
\begin{equation}\label{ODE2}
  \frac{\partial_r u}{u}=-(t+N-2b-2) \frac{1}{r}.
\end{equation} 
Notice that in view of \eqref{equality2}, $t$ becomes an explicit constant, namely $t=-\frac{N-2b-2}{2}$ and therefore \eqref{ODE2} coincides with \eqref{ODE1}. Thus, the same analysis as in the previous case and the same results apply here (since also $\left|\frac{N-(a+b+1)}{2}\right|=\left|\frac{N-(3b-a+3)}{2}\right|$ when $a=b+1$).  \hfill	$\square$
\endproof  
\medskip
To complete the proof of Theorem \ref{ThCC} we need to show that the minimizers  \eqref{min1} and \eqref{min2} (obtained in Subsection \ref{minimizers} above)  belong to  $\mathcal{H}_{a, b}^1$. This will be shown in the following subsection by an approximation (density) argument.  

\subsection{Density in an intermediate space}

In the fourth step of the proof we make usage of an intermediate space, denoted by $X_{a, b}$, to prove that  the minimizers  \eqref{min1} and \eqref{min2} belong to  $\mathcal{H}_{a, b}^1$. 

\noindent{\bf Definition of $X_{a, b}$.}\\
$\bullet$ If $(a, b)\in \mathcal{A}$ then
\begin{multline}
  X_{a, b}:=\Big\{u \in C^\infty (\rr^N\setminus\{0\}) \textrm{ with } \|u\|_{\mathcal{H}_{a, b}^1}<\infty \ |  \exists\  C>0 \textrm{ and } \exists \ D: S^{N-1}\rightarrow \rr \textrm{ s.t. }  \\
  x\mapsto D\left(\frac{x}{|x|}\right)\in C^1(\rr^N\setminus\{0\}) \textrm{ and } |u(x)|\leq \left|D\left(\frac{x}{|x|}\right)\right|e^{-C|x|^{b+1-a}}, \forall x\in \rr^N\setminus\{0\}   \Big\}.
\end{multline}
$\bullet$ If $(a, b)\in \mathcal{B}$ then
\begin{multline}
  X_{a, b}:=\Big\{u \in C^\infty (\rr^N\setminus\{0\}) \textrm{ with } \|u\|_{\mathcal{H}_{a, b}^1}<\infty \ |  \exists\  C>0 \textrm{ and } \exists \ D: S^{N-1}\rightarrow \rr \textrm{ s.t. }  \\
  x\mapsto D\left(\frac{x}{|x|}\right)\in C^1(\rr^N\setminus\{0\}) \textrm{ and } 
  |u(x)|\leq \left|D\left(\frac{x}{|x|}\right)\right||x|^{2(b+1)-N}e^{-C|x|^{b+1-a}}, \forall x\in \rr^N\setminus\{0\}   \Big\}.
\end{multline}
Notice that $(X_{a, b}, \|\cdot \|_{\mathcal{H}_{a, b}^1})$ is a normed space. The main result here is
\begin{proposition}\label{prop1}
  The space $C_0^\infty(\rr^N\setminus\{0\})$ is densely embedded in $(X_{a, b}, \|\cdot \|_{\mathcal{H}_{a, b}^1})$ provided $(a, b)\in \mathcal{A}\cup \mathcal{B}$. 
\end{proposition}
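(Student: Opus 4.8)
The plan is to approximate any $u \in X_{a,b}$ by a two-stage cutoff procedure: first truncate near the origin, then truncate near infinity, controlling each piece in the $\mathcal{H}_{a,b}^1$-norm using the explicit pointwise decay estimates built into the definition of $X_{a,b}$. Fix a smooth radial cutoff $\theta_\e \in C_c^\infty(\rr^N \setminus \{0\})$ with $0 \le \theta_\e \le 1$, $\theta_\e \equiv 1$ on the annulus $\{\e \le |x| \le 1/\e\}$, $\theta_\e \equiv 0$ on $\{|x| \le \e/2\} \cup \{|x| \ge 2/\e\}$, and satisfying the standard gradient bounds $|\nabla \theta_\e| \lesssim 1/|x|$ on the transition regions (this is the cutoff referenced as \eqref{reg_apr_seq} in the degenerate case). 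Then $u_\e := \theta_\e u \in C_0^\infty(\rr^N \setminus \{0\})$, and the goal is to show $\|u_\e - u\|_{\mathcal{H}_{a,b}^1} \to 0$ as $\e \searrow 0$.

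First I would write $u - u_\e = (1 - \theta_\e)u$ and expand the radial-derivative term via the Leibniz rule: $\mathcal{R}(u - u_\e) = (1 - \theta_\e)\mathcal{R}u - u\,\mathcal{R}\theta_\e$. The term involving $(1-\theta_\e)\mathcal{R}u$ and the term $\frac{|(1-\theta_\e)u|^2}{|x|^{2a}}$ in the zeroth-order part of the norm both go to zero by dominated convergence, since $\|u\|_{\mathcal{H}_{a,b}^1} < \infty$ by hypothesis and $1 - \theta_\e \to 0$ pointwise while staying bounded by $1$. The genuinely new term to control is $\int_{\rr^N} \frac{|u\, \mathcal{R}\theta_\e|^2}{|x|^{2b+2}}\,\dx$, which is supported on the two thin annuli $\{\e/2 \le |x| \le \e\}$ and $\{1/\e \le |x| \le 2/\e\}$. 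Using $|\mathcal{R}\theta_\e| \lesssim 1/|x|$, this is bounded by $C\int_{\{\e/2 \le |x| \le \e\} \cup \{1/\e \le |x| \le 2/\e\}} \frac{|u|^2}{|x|^{2b+4}}\,\dx$, and now the pointwise bound $|u(x)| \le |D(x/|x|)|\, e^{-C|x|^{b+1-a}}$ (respectively the $\mathcal{B}$-version with the extra factor $|x|^{2(b+1)-N}$) reduces each integral to an explicit one-dimensional radial integral in $r = |x|$, times a bounded angular integral over $S^{N-1}$ (here using $D \in C^1(\rr^N\setminus\{0\})$, hence bounded on the sphere). I would then check directly that these radial integrals over the shrinking/expanding annuli tend to zero: near infinity the exponential factor forces rapid decay regardless of the polynomial powers, and near the origin one verifies the power of $r$ is integrable over $\{\e/2 \le r \le \e\}$ in the relevant parameter regime (this is where the sign conditions defining $\mathcal{A}$ and $\mathcal{B}$, and the relation $b+1-a$, enter, exactly as in the finiteness computations of Subsection \ref{minimizers}).

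The main obstacle is the near-origin estimate: one must confirm that, in the full range of parameters in $\mathcal{A} \cup \mathcal{B}$, the weight $|x|^{-2b-4}$ against $|u|^2$ is actually integrable near $0$ so that the annular integral $\int_{\e/2 \le |x| \le \e}$ vanishes as $\e \to 0$, rather than merely staying bounded. Since the very definition of $X_{a,b}$ already presupposes $\|u\|_{\mathcal{H}_{a,b}^1} < \infty$, which controls $\int \frac{|\mathcal{R}u|^2}{|x|^{2b+2}}$ and $\int \frac{|u|^2}{|x|^{2a}}$ but not directly $\int \frac{|u|^2}{|x|^{2b+4}}$, I would either (i) use the explicit exponential-type pointwise bound to get the sharper local integrability by hand, or (ii) first establish a Hardy-type inequality on annuli relating $\int_{\text{annulus}} \frac{|u|^2}{|x|^{2b+4}}$ to $\int_{\text{annulus}} \frac{|\mathcal{R}u|^2}{|x|^{2b+2}}$ plus a boundary contribution, and then observe the latter is the tail of a convergent integral. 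Approach (i) is cleaner given that every element of $X_{a,b}$ comes with the stated decay, so I would carry out the proof that way. Once the density is established, combining it with Lemma \ref{key-estimates}, the explicit constant from \eqref{explicit}, and the fact that the candidate minimizers from Subsection \ref{minimizers} lie in $X_{a,b}$ (which is immediate from their closed form) completes the identification of $\tilde C(N,a,b)$ and the membership of the extremizers in $\mathcal{H}_{a,b}^1$.
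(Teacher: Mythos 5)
Your overall plan (cut off near $0$ and $\infty$, control the cross term $u\,x\cdot\nabla\theta_\e$ via the pointwise bounds defining $X_{a,b}$) is the right shape, but with a \emph{standard} cutoff satisfying $|\nabla\theta_\e|\lesssim 1/|x|$ the key step fails on part of the parameter range covered by the proposition. The cross term is $\int \frac{|u|^2|x\cdot\nabla\theta_\e|^2}{|x|^{2b+2}}\,\dx\lesssim \int_{\rm annuli}\frac{|u|^2}{|x|^{2b+2}}\,\dx$ (note: the correct weight is $|x|^{-2b-2}$, not the $|x|^{-2b-4}$ you wrote, since $|x\cdot\nabla\theta_\e|\lesssim 1$), and on the borderline $b=(N-2)/2$, which is included in $\mathcal{A}_1,\mathcal{A}_2,\mathcal{B}_1,\mathcal{B}_2$ (the defining inequalities are non-strict), the radial integrand is exactly $r^{-1}$: e.g.\ for $(a,b)\in\mathcal{A}_1$ an element of $X_{a,b}$ (in particular the extremizer) tends to $D(\sigma)\neq 0$ at the origin, so
\[
\int_{\e/2\le|x|\le\e}\frac{|u|^2\,|x\cdot\nabla\theta_\e|^2}{|x|^{2b+2}}\,\dx\ \asymp\ \int_{\e/2}^{\e}\frac{dr}{r}\ =\ \log 2,
\]
which is bounded but does \emph{not} tend to zero (and is genuinely bounded below for any fixed cutoff profile), so your argument cannot conclude $\|u_\e-u\|_{\mathcal{H}^1_{a,b}}\to0$ there. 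The same critical $r^{-1}$ behavior occurs at infinity when $b=(N-2)/2$ in $\mathcal{A}_2$ and $\mathcal{B}_1$; note also that your statement ``near infinity the exponential factor forces rapid decay'' is wrong precisely when $b+1-a<0$ ($\mathcal{A}_2$, $\mathcal{B}_1$), where $e^{-C|x|^{b+1-a}}\to1$ as $|x|\to\infty$ and the decay roles of $0$ and $\infty$ are exchanged.

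This is exactly why the paper does not use a fixed-profile cutoff: it takes the logarithmic cutoff $\eta_\e$ of \eqref{apr_seq}, equal to $1$ on $\{\e\le|x|\le1/\e\}$ and interpolating logarithmically over the thick annuli $\{\e^2\le|x|\le\e\}$ and $\{1/\e\le|x|\le1/\e^2\}$, so that $|\nabla\eta_\e|=(\log\frac1\e)^{-1}|x|^{-1}$; the extra factor $(\log\frac1\e)^{-2}$ in the cross term converts the critical $\int r^{-1}dr=\log\frac1\e$ into $(\log\frac1\e)^{-1}\to0$, and the non-critical cases only improve. The paper then mollifies ($\phi_\e=\rho_\e\ast\eta_\e$) to land in $C_0^\infty(\rr^N\setminus\{0\})$ and runs a Cauchy--Schwarz/Fubini estimate case by case over $\mathcal{A}_1,\mathcal{A}_2,\mathcal{B}_1,\mathcal{B}_2$, using the pointwise bounds exactly as you intend. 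So the missing idea in your proposal is the logarithmic (or some equally slowly varying) cutoff; away from the line $b=(N-2)/2$ your simpler cutoff would suffice, but the proposition as stated includes that line, and the final steps you describe (membership of the extremizers in $X_{a,b}$ and the identification of $\tilde C(N,a,b)$) are fine once the density step is repaired.
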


\begin{remark} \em
  As a consequence of Proposition \ref{prop1} we have that $X_{a, b}\subset \mathcal{H}_{a,b}^1$.	  This implies
  \begin{equation}
    \tilde{C}(N, a, b)\geq \inf_{u\in X_{a, b}}\tilde{E}(u)\geq \inf_{u\in \mathcal{H}_{a, b}^1} \tilde{E}(u)=\tilde{C}(N, a, b). 
  \end{equation}
  Therefore $\tilde{C}(N, a, b)=\inf_{u \in X_{a, b}}\tilde{E}(u)$ and the minimizers are actually achieved in $X_{a, b}$. 
\end{remark}

\begin{proof}[Proof of Proposition \ref{prop1}]

  In what follows,  to simplify the computations, we will write $\lesssim$ instead of $\leq C$ when we refer to universal constants C.

  For $\e>0$ small enough we consider the sequence $\{\eta_\e\}_\e$ defined on $\rr^N$ by
  \begin{equation}\label{apr_seq}
    \eta_\e(x)=\left\{\begin{array}{cc}
      1, & \e\leq |x|\leq 1/\e \\ [5pt]
      \frac{\log (|x|/\e^2)}{\log 1/\e}, & \e^2\leq |x|\leq \e\\ [5pt]
      \frac{\log \e |x|}{\log 1/\e},  & 1/\e\leq |x|\leq 1/\e^2\\ [5pt]
      0, & \textrm{ otherwise}.
    \end{array}\right. 
  \end{equation}
  We get that $\eta_\e\in C_c(\rr^N)\cap H^1(\rr^N)$ with $\|\eta_{\e}\|_{\mathcal{H}_{a, b}^1}< \infty$. The gradient satisfies the simplified form 
  \begin{equation}\label{grad}
    \nabla \eta_\e(x)=\left\{\begin{array}{cc}
      (\log \frac{1}{\e})^{-1}\frac{x}{|x|^2}, & \e^2< |x|< \e \textrm{ or } 1/\e< |x|< 1/\e^2  \\ [5pt]
      0, & \textrm{ otherwise}.
    \end{array}\right.
  \end{equation}
  Also, consider $$\rho_\e(x):=\left(\frac{\e^2}{2}\right)^{-N}\rho\left(\frac{2x}{\e^2}\right), \quad \textrm{ with } \int_{\rr^N} \rho_\e\dx =1, \quad \forall \e>0,$$
  where $\rho$ is the standard convolution kernel (mollifier) with $\int_{\rr^N}\rho dx=1$. 
  Notice that $\rho_\e\in C_0^\infty(\rr^N)$ with $\textrm{Supp} \rho_\e=B_{\e^2/2}(0)$ (i.e. the ball of radius $\e^2/2$ centered at $0$).  

  Let $u\in X_{a, b}$ and consider the approximation 
  \begin{equation}\label{reg_apr_seq}
    u_\e(x):=\phi_\e(x)u(x), \textrm{ where } \phi_\e(x):=(\rho_\e\ast \eta_\e)(x).
  \end{equation}
  Taking into account the convolution properties it holds that 
  $\phi_\e\in C_0^\infty(\rr^N\setminus\{0\})$ with $\textrm{Supp}\phi_\e\subset \{x\in \rr^N \ | \ \e^2/2\leq |x|\leq 1/\e^2+\e^2/2 \}$.

  We show that $\{u_\epsilon\}_{\epsilon>0} \subset C_0^\infty(\rr^N\setminus \{0\})$ approximates $u$ in the $\|\cdot\|_{\mathcal{H}_{a, b}^1}$-norm. 

  First, we have 
  $$D_j \phi_\e(x)=\rho_\e\ast D_j \eta_\e(x), \quad D_j(u-u_\e)=(1-\phi_\e) D_j u-uD_j \phi_\e,$$ 
  where $D_{j}=\p/\p x_{j}$.

  \noindent{\it Step 1:} $u_\e|x|^{-a}\to u|x|^{-a}$ in $L^2(\rr^N)$. 
  First we notice that 
  \begin{itemize}
    \item $\phi_\e\to 1$ a.e. as $\e \to 0$;
    \item $0\leq \phi_\epsilon(x)\leq 1$ for all $x\in \rr^N$.
  \end{itemize}
  Indeed, for the second item we have 
  \begin{align*}
    \phi_\e(x)= \int_{\R^{N}} \rho_\e(x-y)\eta_\e(y) dy \leq \int_{\R^{N}} \rho_\e(x-y) =\int_{\rr^N}\rho_\e(z)dz =1, \quad \forall x\in \rr^N.
  \end{align*}
  For the first item we observe that if $x\neq 0$ then for $\e$ small enough we have $\eta_\e=1$ in the support of $\rho_{\e}(x-\cdot)$ and so $\phi_\e(x)=1.$ 
  For instance, take $\e < \min \left\{\frac{2|x|}{3},\frac{1}{1+|x|} \right\}$.

  Therefore, we have
  \begin{align*}
    \|u_\e |x|^{-a} - u |x|^{-a}\|_{L^2(\rr^N)} =\|u|x|^{-a}(1-\phi_\e)\|_{L^2(\rr^N)} \to 0, \textrm{ as } \e \to 0,
  \end{align*}
  by the dominated convergence theorem. 

  \noindent{\it Step 2: } $|x|^{-b}D_j u_{\e}\to |x|^{-b}D_j u$ in $L^2(\rr^N)$. 

  We have $D_j(u_\e-u)|x|^{-b}=(1-\phi_\e) |x|^{-b} D_j u-D_j\phi_\e |x|^{-b} u $. Clearly, $(1-\phi_\e) |x|^{-b} D_j u\to 0$ in $L^2(\rr^N)$ again by dominated convergence theorem. 

  It remains to show that $D_j\phi_\e |x|^{-b} u\to 0$ in $L^2(\rr^N)$. 

  Let us denote $\mathbb{A}_\e:=\{x\in \rr^N \ |\ \e^2< |x|< \e\}$ and $\mathbb{B}_\e:=\{x\in \rr^N \ |\ 1/\e < |x|< 1/\e^2\}$. 

  By \eqref{grad} and Cauchy-Schwarz inequality we first have  
  \begin{align*}
    |\rho_e\ast D_j \eta_\e(x)|&=\left|\left(\log \frac{1}{\e}\right)^{-1} \int_{\mathbb{A}_\e\cup \mathbb{B}_\e} \rho_\e(x-y)\frac{y_j}{|y|^2} dy  \right|\\
    &\leq \left(\log \frac{1}{\e}\right)^{-1} \left(\int_{\mathbb{A}_\e\cup \mathbb{B}_\e} \rho_\e(x-y) dy\right)^{1/2}\left(\int_{\mathbb{A}_\e\cup \mathbb{B}_\e} \rho_\e(x-y)\frac{1}{|y|^2} dy\right)^{1/2}\\
    & \leq \left(\log \frac{1}{\e}\right)^{-1} \left(\int_{\mathbb{A}_\e\cup \mathbb{B}_\e} \rho_\e(x-y)\frac{1}{|y|^2} dy\right)^{1/2}   
  \end{align*} 
  Then, by Fubini we obtain  
  \begin{align*}
    \int_{\R^{N}}|x|^{-2b}|u(x)|^2 |D_j \phi_\e|^2 dx &=\int_{\rr^N} |x|^{-2b}|u(x)|^2|\rho_e\ast D_j \eta_\e(x)|^2 dx \\
    &\leq  \int_{\rr^N} |x|^{-2b} |u(x)|^2 \left(\log \frac{1}{\e}\right)^{-2}  \int_{\mathbb{A}_\e\cup \mathbb{B}_\e} \rho_\e(x-y)\frac{1}{|y|^2} dy dx\\
    &=\left(\log \frac{1}{\e}\right)^{-2} \int_{\mathbb{A}_\e} \frac{1}{|y|^2}\int_{\rr^N} |x|^{-2b} |u(x)|^2 \rho_e(x-y) dx dy\\
    & + \left(\log \frac{1}{\e}\right)^{-2} \int_{\mathbb{B}_\e} \frac{1}{|y|^2}\int_{\rr^N} |x|^{-2b} |u(x)|^2 \rho_e(x-y) dx dy\\
    &:=I_{1, \e} + I_{2, \e}.   
  \end{align*}
  Since for any $y\in \mathbb{A}_\epsilon$ and $x\in B_{\epsilon^2/2}(y)$ we have $\frac 3  2 |y|\geq |x|\geq \frac 1 2 |y|$ and we obtain 
  \begin{align}\label{I1-est}
    I_{1, \epsilon} & \lesssim \left(\log \frac{1}{\e}\right)^{-2} \int_{\mathbb{A}_\e} |y|^{-2-2b} \int_{B_{\epsilon^2/2}(y)} |u(x)|^2 \rho_\e(x-y) dx dy 
  \end{align}
  Also,	since for any $y\in \mathbb{B}_\e$ and $x\in B_{\epsilon^2/2}(y)$ we have $\frac 3  2 |y|\geq |x|\geq \frac 1 2 |y|$ for $\epsilon$ small enough, we obtain
  \begin{align}\label{I2-est}
    I_{2, \epsilon} & \lesssim \left(\log \frac{1}{\e}\right)^{-2} \int_{\mathbb{B}_\e} |y|^{-2-2b} \int_{B_{\epsilon^2/2}(y)} |u(x)|^2 \rho_\e(x-y) dx dy 
  \end{align}
  Next we distinguish four cases for $(a, b)$: 
  \begin{enumerate}
    \item The case $(a, b)\in \mathcal{A}_1$. 
      Then, there holds
      \begin{align*}
	I_{1, \epsilon} &\lesssim\|D\|_{L^\infty(S^{N-1})}^2 \left(\log \frac{1}{\e}\right)^{-2}  \int_{\mathbb{A}_\e} |y|^{-2-2b} \int_{B_{\epsilon^2/2}(y)} \rho_\e(x-y) dx dy\\
	& \lesssim\left(\log \frac{1}{\e}\right)^{-2}  \int_{\mathbb{A}_\e} |y|^{-2-2b} dy\\
	& \lesssim\left(\log \frac{1}{\e}\right)^{-2} \int_{\e^2}^{\e} r^{-2b+N-3}dr\\
	& \lesssim\left(\log \frac{1}{\e}\right)^{-2} \int_{\e^2}^{\e} \frac 1 r dr\\ 
	&= \left(\log \frac 1 \epsilon\right)^{-1}\rightarrow 0, \textrm{ as } \e \to 0.  \\
	I_{2, \epsilon} &\lesssim\|D\|_{L^\infty(S^{N-1})}^2 \left(\log \frac{1}{\e}\right)^{-2}  \int_{\mathbb{B}_\e} |y|^{-2-2b} e^{-\tilde{C}|y|^{b+1-a}} \int_{B_{\epsilon^2/2}(y)} \rho_\e(x-y) dx dy\\
	& \lesssim\left(\log \frac{1}{\e}\right)^{-2}  \int_{\mathbb{B}_\e} |y|^{-2-2b} e^{-\tilde{C}|y|^{b+1-a}} dy\\
	& \lesssim\left(\log \frac{1}{\e}\right)^{-2} \int_{1/\e}^{1/\e^2} r^{-2b+N-3} e^{-\tilde{C}r^{b+1-a}} dr\\
	& \lesssim\left(\log \frac{1}{\e}\right)^{-2} e^{-\tilde{C}\left(\frac 1 \epsilon\right)^{b+1-a}} \int_{1/\e}^{1/\e^2} r^{-2b+N-3}  dr \to 0, \textrm{ as } \epsilon \to 0. 
      \end{align*}
    \item The case $(a, b)\in \mathcal{A}_2$. 
      Then, there holds
      \begin{align*}
	I_{1, \epsilon} &\lesssim\|D\|_{L^\infty(S^{N-1})}^2 \left(\log \frac{1}{\e}\right)^{-2}  \int_{\mathbb{A}_\e} |y|^{-2-2b} e^{-\tilde{C}|y|^{b+1-a}} \int_{B_{\epsilon^2/2}(y)} \rho_\e(x-y) dx dy\\
	& \lesssim\left(\log \frac{1}{\e}\right)^{-2}  \int_{\mathbb{A}_\e} |y|^{-2-2b}  e^{-\tilde{C}|y|^{b+1-a}} dy\\
	& \lesssim\left(\log \frac{1}{\e}\right)^{-2} e^{-\tilde{C}\epsilon^{b+1-a}} \int_{\e^2}^{\e} r^{-2b+N-3}dr\rightarrow 0, \textrm{ as } \e \to 0.  \\
	I_{2, \epsilon} &\lesssim\|D\|_{L^\infty(S^{N-1})}^2 \left(\log \frac{1}{\e}\right)^{-2}  \int_{\mathbb{B}_\e} |y|^{-2-2b}  \int_{B_{\epsilon^2/2}(y)} \rho_\e(x-y) dx dy\\
	& \lesssim\left(\log \frac{1}{\e}\right)^{-2}  \int_{\mathbb{B}_\e} |y|^{-2-2b} dy\\
	& \lesssim\left(\log \frac{1}{\e}\right)^{-2} \int_{1/\e}^{1/\e^2} r^{-2b+N-3} dr\\
	& \lesssim\left(\log \frac{1}{\e}\right)^{-2}  \int_{1/\e}^{1/\e^2} \frac{1}{r}  dr \to 0,\\
	&= \left(\log \frac 1 \epsilon\right)^{-1}\rightarrow 0, \textrm{ as } \e \to 0. 
      \end{align*}
    \item The case $(a, b)\in \mathcal{B}_1$.
      Then, there holds
      \begin{align*}
	I_{1, \epsilon} &\lesssim\|D\|_{L^\infty(S^{N-1})}^2 \left(\log \frac{1}{\e}\right)^{-2}  \int_{\mathbb{A}_\e} |y|^{-N} e^{-\tilde{C}|y|^{b+1-a}} \int_{B_{\epsilon^2/2}(y)} \rho_\e(x-y) dx dy\\
	& \lesssim\left(\log \frac{1}{\e}\right)^{-2}  \int_{\mathbb{A}_\e} |y|^{-N}  e^{-\tilde{C}|y|^{b+1-a}} dy\\
	& \lesssim\left(\log \frac{1}{\e}\right)^{-2} e^{-\tilde{C}\epsilon^{b+1-a}} \int_{\e^2}^{\e} \frac 1 r dr\\
	& \lesssim\left(\log \frac{1}{\e}\right)^{-1} e^{-\tilde{C}\epsilon^{b+1-a}} \rightarrow 0, \textrm{ as } \e \to 0.  \\
	I_{2, \epsilon} &\lesssim\|D\|_{L^\infty(S^{N-1})}^2 \left(\log \frac{1}{\e}\right)^{-2}  \int_{\mathbb{B}_\e} |y|^{-N}  \int_{B_{\epsilon^2/2}(y)} \rho_\e(x-y) dx dy\\
	& \lesssim\left(\log \frac{1}{\e}\right)^{-2}  \int_{\mathbb{B}_\e} |y|^{-N}   dy\\
	& \lesssim\left(\log \frac{1}{\e}\right)^{-2} \int_{1/\e}^{1/\e^2} \frac 1 r dr\\
	& \lesssim\left(\log \frac{1}{\e}\right)^{-1} \rightarrow 0, \textrm{ as } \e \to 0.  
      \end{align*}
    \item The case $(a, b)\in \mathcal{B}_2$.
      Then, there holds
      \begin{align*}
	I_{1, \epsilon} &\lesssim\|D\|_{L^\infty(S^{N-1})}^2 \left(\log \frac{1}{\e}\right)^{-2}  \int_{\mathbb{A}_\e} |y|^{-N}  \int_{B_{\epsilon^2/2}(y)} \rho_\e(x-y) dx dy\\
	& \lesssim\left(\log \frac{1}{\e}\right)^{-2}  \int_{\mathbb{A}_\e} |y|^{-N}   dy\\
	& \lesssim\left(\log \frac{1}{\e}\right)^{-2} \int_{\e^2}^{\e} \frac 1 r dr\\
	& \lesssim\left(\log \frac{1}{\e}\right)^{-1} \rightarrow 0, \textrm{ as } \e \to 0.  \\
	I_{2, \epsilon} &\lesssim\|D\|_{L^\infty(S^{N-1})}^2 \left(\log \frac{1}{\e}\right)^{-2}  \int_{\mathbb{B}_\e} |y|^{-N} e^{-\tilde{C}|y|^{b+1-a}} \int_{B_{\epsilon^2/2}(y)} \rho_\e(x-y) dx dy\\
	& \lesssim\left(\log \frac{1}{\e}\right)^{-2}  \int_{\mathbb{B}_\e} |y|^{-N}  e^{-\tilde{C}|y|^{b+1-a}} dy\\
	& \lesssim\left(\log \frac{1}{\e}\right)^{-2} e^{-\tilde{C}\left(\frac{1}{\e}\right)^{b+1-a}} \int_{1/\e}^{1/\e^2} \frac 1 r dr\\
	& \lesssim\left(\log \frac{1}{\e}\right)^{-1} e^{-\tilde{C}\left(\frac{1}{\e}\right)^{b+1-a}} \rightarrow 0, \textrm{ as } \e \to 0.  \\
      \end{align*}
  \end{enumerate}
\end{proof}

  \end{document}